\theoremstyle{plain}
\newtheorem{thm}{Theorem}[section]
\newtheorem{prop}[thm]{Proposition}
\newtheorem{obs}[thm]{Observation}
\newtheorem{conj}[thm]{Conjecture}
\newtheorem*{thm*}{Theorem}
\newtheorem*{cor*}{Corollary}
\theoremstyle{definition}
\title{A Cantor-Bernstein theorem for infinite matroids}
\author{Attila Jo\'{o}}
\thanks{The author would like to thank the generous support of the Alexander 
von Humboldt Foundation and NKFIH 
OTKA-129211}
\address{Attila Jo\'{o},
Department of Mathematics, University of Hamburg, Bundesstra{\ss}e 55 (Geomatikum), 20146 Hamburg, Germany}
\email{attila.joo@uni-hamburg.de}
\address{Attila Jo\'{o},
Logic, Set theory and topology department, Alfr\'{e}d R\'{e}nyi Institute of Mathematics,  13-15 Re\'{a}ltanoda St., 
Budapest, Hungary}
\email{jooattila@renyi.hu}
\keywords{infinite matroid, Cantor-Bernstein,  packing and covering}
\subjclass[2020]{Primary: 05B35,  05C63, 05C38. Secondary: 03E35} 
\begin{document}

\begin{abstract}
We give a common matroidal generalisation of  `A Cantor-Bernstein theorem for paths in graphs' by Diestel and 
Thomassen and `A Cantor-Bernstein-type theorem for spanning trees in infinite graphs' by ourselves. 
\end{abstract}
\maketitle

\section{Introduction}

Let us reformulate the Cantor-Bernstein theorem in the language of graph theory:
\begin{thm}[Cantor-Bernstein,  \cite{cantor1987}]
If  $ G=(V_0,V_1;E) $ is a  bipartite graph and matching $ I_i $ covers $ V_i $ for $ i\in \{ 0,1 \} $, then $ G $ admits a perfect 
matching.
\end{thm}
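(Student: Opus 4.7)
The plan is to analyse the multigraph $H$ on vertex set $V_0\cup V_1$ with edge (multi)set $I_0\cup I_1$, where an edge lying in both matchings is counted twice. Since every vertex $v\in V_i$ is covered by $I_i$ and each $I_j$ contributes at most one edge at $v$, every vertex of $H$ has degree one or two. Hence the components of $H$ are cycles (finite, including the degenerate length-two ``cycle'' formed by a doubled edge) or paths (finite, one-way infinite, or two-way infinite), and, because no vertex can meet two edges of the same matching, the edges along any component alternate between $I_0$ and $I_1$.

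The key observation is that no component can be a finite path. A degree-one endpoint $v\in V_i$ of $H$ is incident precisely to the unique $I_i$-edge at $v$, so the colour of the first (and last) edge of such a path is forced by the bipartition class of the corresponding endpoint. A short case analysis on whether the two endpoints of a finite path lie on the same or on different sides of the bipartition shows that the forced colours are never compatible with the alternating pattern dictated by the bipartiteness of $G$ (the side-classes of the endpoints determine the parity of the path length, while the forced end-colours dictate an incompatible parity). Consequently every component of $H$ is a cycle, a one-way infinite ray, or a two-way infinite path.

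On each such component it remains to select a sub-matching of $I_0\cup I_1$ that covers all its vertices. A finite alternating cycle or a two-way infinite alternating path admits two natural such sub-matchings, namely its $I_0$-edges and its $I_1$-edges, either of which will do. For a ray the choice is forced: if its unique endpoint lies in $V_i$, one must take its $I_i$-edges, i.e.\ the edges at odd positions along the ray. Taking the union of these selections across all components of $H$ yields a perfect matching of $G$. The only point requiring genuine care is the parity argument ruling out finite paths; the rest is essentially bookkeeping.
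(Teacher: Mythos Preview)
Your argument is correct: the classical ``alternating components'' proof. The parity check ruling out finite paths works exactly as you outline (endpoints on the same side force even length but equal end-colours, endpoints on opposite sides force odd length but unequal end-colours, and in both cases alternation gives the opposite), and the selection of a perfect matching on each remaining component is straightforward.

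Note, however, that the paper does \emph{not} prove this theorem at all: it is merely quoted with a citation as the classical Cantor--Bernstein theorem, serving as motivation. To the extent that the paper's results yield it --- as the bipartite special case of Ore's Theorem~\ref{Ore}, or ultimately of the matroidal Theorem~\ref{thm: infmatOrew} applied to the two partition matroids of $G$ --- the route is entirely different: a transfinite exchange procedure on independent sets, controlled by a well-ordering on finite subsets of $E$. Your proof is far more elementary and direct for this particular statement; the paper's machinery is built not to re-prove Cantor--Bernstein but to generalise it to settings (paths, spanning trees, matroids) where the simple alternating-path decomposition is unavailable.
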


Ore discovered the following generalisation of the Cantor-Bernstein theorem which is the 
extension of the Mendelsohn-Dulmage theorem \cite[Theorem 1]{mendelsohn1958some} to infinite graphs:
\begin{thm}[Ore, {\cite[Theorem 7.4.1]{ore1962thetheory}}]\label{Ore}
Let $ G=(V_0,V_1;E) $ be a  bipartite graph and let $ I_0, I_1\subseteq E $ be matchings in $ G $. Then there exists 
a matching $ I $  such that $ V(I)\cap V_i\supseteq V(I_i)\cap V_i $ for $ i\in \{ 0,1 \} $.
\end{thm}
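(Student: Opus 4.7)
The plan is to work inside the subgraph $H := (V_0 \cup V_1, I_0 \cup I_1)$. Since $I_0$ and $I_1$ are matchings, every vertex will have degree at most $2$ in $H$, so the components of $H$ will be pairwise vertex-disjoint, each of them a finite path, a one-way infinite ray, a two-way infinite path, or an even cycle. Writing $T := (V(I_0) \cap V_0) \cup (V(I_1) \cap V_1)$ for the set of vertices that must be covered, it will suffice to produce, for every component $C$, a matching $I_C \subseteq E(C)$ with $T \cap V(C) \subseteq V(I_C)$: the union $I := \bigcup_C I_C$ will then be a matching in $G$ witnessing the conclusion, since distinct components are vertex-disjoint.

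On each component the edges alternate between $I_0$ and $I_1$, so the two natural candidates for $I_C$ are $I_0 \cap E(C)$ and $I_1 \cap E(C)$, both of which are automatically matchings. For an even cycle or a two-way infinite path both candidates are perfect matchings of $C$, and either choice will do. For a one-way infinite ray with endpoint $v_0$, one candidate is a perfect matching of $C$ while the other covers every vertex except $v_0$, so a case split on whether $v_0 \in T$ will select the working one. Every interior vertex of any path or ray lies in $V(I_0) \cap V(I_1) \subseteq T$ because it has degree $2$ in $H$; consequently only the endpoints of the finite paths will still need care.

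The core of the argument will therefore be the finite-path case $C = v_0 v_1 \cdots v_k$. Setting $e_i := v_i v_{i+1}$ and supposing $e_0 \in I_j$ and $v_0 \in V_a$, the alternation along $C$ forces $e_i \in I_{(j+i) \bmod 2}$ while bipartiteness forces $v_i \in V_{(a+i) \bmod 2}$. A short parity calculation then yields $v_0 \in T \Leftrightarrow a = j$ and $v_k \in T \Leftrightarrow a \neq j$, so exactly one of the two endpoints will be required. Picking the candidate containing the unique edge of $C$ at that required endpoint will cover that endpoint together with every interior vertex, finishing the component. The main obstacle I expect is precisely this parity bookkeeping: a priori a required endpoint of a finite path could be missed by both $I_0 \cap E(C)$ and $I_1 \cap E(C)$, and it is the bipartite alternation that rules this out; once this is noticed, the rest of the construction is essentially mechanical.
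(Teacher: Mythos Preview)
Your argument is correct. One small point worth a sentence in the write-up: your alternation claim tacitly assumes each edge of a component lies in exactly one of $I_0,I_1$. An edge $e\in I_0\cap I_1$ has both endpoints of degree~$1$ in $H$, so it forms a single-edge component on its own; there both candidates equal $\{e\}$ and the case is trivial. For every component with at least two edges the strict alternation then follows exactly as you use it.

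The paper does not argue Ore's theorem directly. It records that Theorem~\ref{Ore} is the bipartite special case of the Diestel--Thomassen path theorem (Theorem~\ref{Pym}), and in Section~\ref{sec: Appl} it derives Theorem~\ref{Pym} from the infinite Kundu--Lawler theorem (Theorem~\ref{thm: infmatOrew}) by letting $M_i$ be the cycle matroid of $G$ with $V_i$ contracted and applying a transfinite exchange process governed by a well-order on finite edge sets. So the paper's route is matroidal and goes through the machinery of Section~\ref{sec: infinit KuLa}, whereas yours is the classical elementary argument that stays inside $I_0\cup I_1$ and does a component-by-component parity check. Your proof is shorter and self-contained for this particular statement; the paper's detour is deliberate, since its point is precisely that Ore, Diestel--Thomassen and the spanning-tree Cantor--Bernstein all fall out of the same matroidal phenomenon (Theorems~\ref{thm: infmatOrew} and~\ref{thm: infmatOre}).
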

Diestel and Thomassen  examined in their paper `A Cantor-Bernstein theorem for paths in graphs' a more general 
graph-theoretic setting in which disjoint paths are used to connect two vertex sets. We call a finite path that meets the vertex 
sets $ V_0 $ and  $ V_1 
$ and subgraph-minimal with respect to this property a $ V_0V_1 $-path.
\begin{restatable}[Diestel and Thomassen, \cite{diestel2006cantor}]{thm}{Pym}\label{Pym}
Assume that $ G=(V,E) $ is a graph, $ V_0, V_1\subseteq V$   and  $ \mathcal{P}_i $ is a system of disjoint $V_0V_1 
$-paths in 
$ G $ for $  i\in \{ 0,1 \} $. Then there exists a system of disjoint $ V_0V_1 $-paths $ \mathcal{P} $ with $ V(\mathcal{P})\cap 
V_i \supseteq V(\mathcal{P}_i)\cap V_i $ for $ i\in \{ 0,1 \} $.  
\end{restatable}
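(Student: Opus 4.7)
The plan is to construct $\mathcal{P}$ by an alternating-walk argument inside the (multigraph) union $H$ of $\mathcal{P}_0$ and $\mathcal{P}_1$, modelled on the augmenting-path proof of Ore's Theorem~\ref{Ore}. Without loss of generality we may assume $G = \bigcup\mathcal{P}_0 \cup \bigcup\mathcal{P}_1$ and regard it as a two-coloured multigraph whose edges are red (from $\mathcal{P}_0$) or blue (from $\mathcal{P}_1$), with parallel copies taken if an edge appears in both systems. Because each $\mathcal{P}_i$-path is a subgraph-minimal $V_0V_1$-path, its interior misses $V_0 \cup V_1$; consequently, every vertex of $V_0 \cup V_1$ has total degree at most $2$ in $H$ (at most one edge of each colour), whereas every vertex outside $V_0 \cup V_1$ has even red-degree and even blue-degree. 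Orienting each edge from the $V_0$-end toward the $V_1$-end of its defining path turns interior vertices into balanced vertices in each colour, $V_0$-vertices into sources and $V_1$-vertices into sinks.

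Write $A_0 := V(\mathcal{P}_0)\cap V_0$ and $B_1 := V(\mathcal{P}_1)\cap V_1$ for the sets to be covered. The construction of $\mathcal{P}$ would proceed by a transfinite recursion on $A_0 \cup B_1$, maintaining at each stage a family $\mathcal{P}^*$ of pairwise disjoint $V_0V_1$-paths inside $H$. Given a still-uncovered $v \in A_0$, I start a directed walk at $v$ inside $H - V(\mathcal{P}^*)$: leave $v$ along its outgoing red edge, and at every interior vertex that is also met by a $\mathcal{P}_1$-path either keep the current colour or switch, the policy being chosen so as to steer the walk toward an uncovered vertex of $B_1$ whenever one is reachable. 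Once the walk terminates in $V_1$, a minimal $V_0V_1$-subpath of it is extracted and appended to $\mathcal{P}^*$. The case $v \in B_1$ is handled symmetrically by traversing edges against their orientation.

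The hard part is to show that the recursive step always succeeds. Two issues must be resolved: first, that the walk cannot become stuck at an interior vertex or be forced back to $V_0$, which follows from the oriented balance at interior vertices together with a parity argument inside $H - V(\mathcal{P}^*)$; second, that the switching policy can be arranged so that the walk lands on an uncovered $B_1$-vertex whenever one is still reachable, which is the exact analogue of the augmenting-path step in the proof of Theorem~\ref{Ore} and is the most delicate point of the argument. The infinite regime adds a further layer of bookkeeping: limit stages are taken to be unions of what was built earlier, and a minimal-walk / compactness argument is needed to avoid constructing infinite walks. Piecing these together yields a system $\mathcal{P}$ of disjoint $V_0V_1$-paths covering $A_0$ in $V_0$ and $B_1$ in $V_1$, as required.
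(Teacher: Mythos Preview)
Your proposal is a sketch rather than a proof, and the central step---``the recursive step always succeeds''---is precisely where the argument breaks. A two-edge example already defeats the procedure as you describe it: take $V_0=\{a\}$, $V_1=\{c,d\}$, let $\mathcal{P}_0$ consist of the single edge $ac$ and $\mathcal{P}_1$ of the single edge $ad$. Then $A_0=\{a\}$ and $B_1=\{d\}$, and the unique solution is $\mathcal{P}=\{ad\}$. Your recursion, however, processes $a\in A_0$ by leaving along its outgoing red edge, producing the path $ac$; there is no interior vertex at which to switch. Now $d\in B_1$ is to be handled by walking backwards along the blue edge, but that leads to $a\in V(\mathcal{P}^*)$, and the step fails. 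More generally, once you delete $V(\mathcal{P}^*)$ from $H$ you destroy the colour-by-colour degree balance you set up (a path in $\mathcal{P}^*$ may sever a red or blue path mid-way, leaving odd-degree stubs), so the ``parity argument inside $H - V(\mathcal{P}^*)$'' you invoke is not available. The switching policy you allude to would have to be specified and proved correct, and the example shows that no local rule at interior vertices can suffice; the choice of where a walk should terminate is a global matching problem in its own right.

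For comparison, the paper does not attempt any direct path-building. It defines $M_i$ as the cycle matroid of the graph obtained from $G$ by contracting $V_i$ to a point, so that $E(\mathcal{P}_0)$ and $E(\mathcal{P}_1)$ are common independent sets of $M_0$ and $M_1$, and then applies Theorem~\ref{thm: infmatOrew} to obtain a common independent set $I$ spanning $E(\mathcal{P}_{1-i})$ in $M_i$. The desired paths are then read off from the components of the forest $G[I]$. All the delicate bookkeeping---including the infinite case---is absorbed into the matroidal Cantor--Bernstein statement, whose proof uses a carefully chosen well-order rather than a greedy walk.
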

Note that Theorem \ref{Ore} is the special case of Theorem \ref{Pym} where $ G $ is bipartite and the sets $ V_i $ are its vertex 
classes.  

 In our paper entitled `A Cantor-Bernstein-type theorem for spanning trees in infinite graphs' we investigated  if the existence of a 
 $ \kappa $-packing and a $ \kappa $-covering by spanning trees 
implies the existence of a $ \kappa $-family of spanning trees which is both, i.e. a $ \kappa $-partition: 
\begin{thm}[Erde et al. {\cite[Theorem 1.1]{erde2021cantor}}]\label{tree partition}
Let $ G=(V,E) $ be a  graph and let  $ \kappa $ be a cardinal. If there are $ \kappa $ many pairwise edge-disjoint 
spanning trees in $ G $ and $ E $ can be covered by $ \kappa $ many spanning trees, then $ E $ can be partitioned into $ \kappa $ 
many spanning trees.
\end{thm}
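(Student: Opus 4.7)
My plan is to prove the theorem by a Cantor-Bernstein style transfinite construction that starts from the given packing and progressively absorbs the ``missing'' edges, using the covering family as a reservoir that indicates where each edge \emph{can} legally live. Fix a packing $(T_i)_{i<\kappa}$ and a covering $(S_i)_{i<\kappa}$ by spanning trees, set $R := E \setminus \bigcup_{i<\kappa} T_i$, and enumerate $R = (e_\alpha)_{\alpha<\lambda}$. I would like to build a transfinite sequence $(T_i^\alpha)_{i<\kappa,\, \alpha \le \lambda}$ with $T_i^0 = T_i$ in which every $T_i^\alpha$ is a spanning tree, the $T_i^\alpha$ are pairwise edge-disjoint, and after stage $\alpha$ every $e_\beta$ with $\beta < \alpha$ has been incorporated into $\bigcup_i T_i^\alpha$. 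The final family $(T_i^\lambda)_{i<\kappa}$ will then be the desired partition.

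The one-step move is a fundamental-cycle exchange: to absorb a missing edge $e$, add it to some $T_i^\alpha$, which creates a unique finite cycle $C$; delete some $e' \in C \setminus \{e\}$, and then try to place $e'$ into another tree $T_j^\alpha$, possibly starting a chain of such swaps. The covering family guides the selection so that these chains can be closed off: since each deleted edge lies in some $S_k$, I can use $S_k$ both to locate a next recipient tree and to produce the fundamental-cycle witness needed to continue, in the spirit of the augmenting-path interleaving of two given systems in Theorem \ref{Pym}.

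The main obstacle will be handling the limit stages. An arbitrary pointwise limit of spanning trees in an infinite graph can acquire a cycle or lose connectivity, so the recursion must be organised so that on every fixed edge only finitely many (or, more generally, well-controlled) changes occur. I would attempt this with a two-layer book-keeping: an outer well-ordering of $R$ of length $\lambda$ together with an inner fairness schedule that at each step processes a new $e_\alpha$ and simultaneously settles the debts created by earlier swaps, using the $S_i$ as certificates that each swap chain has finite support relative to the portion of $G$ touched so far. Verifying that this scheme actually produces \emph{spanning trees} in the limit, and not merely spanning forests or subgraphs with topological cycles, is where I expect the real content of the argument to lie.
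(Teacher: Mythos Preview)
Your overall plan---transfinite recursion starting from the packing, using fundamental-cycle exchanges to absorb the missing edges---is the right skeleton, and it matches how the paper obtains the tree-partition theorem (as the special case of Proposition~\ref{prop: family CBw} with every $M_i$ equal to the cycle matroid of $G$ and with $P_i,R_i$ bases). But there is a genuine gap exactly where you yourself locate it: the limit stages.

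The paper's key device, which your sketch does not contain, is a specific rule for \emph{which} edge to delete in each exchange, engineered so that a well-founded invariant decreases. Concretely: well-order $P_i\cup R_i$ by some $<_i$ with $R_i$ an initial segment, lift this to a well-order $\prec_i$ on finite subsets, and in every exchange delete the $<_i$-\emph{maximal} element of the fundamental circuit (this element is forced to lie in $P_i$). One then tracks, for each $g\in P_i$, the $\prec_i$-least finite witness $S_g^\beta\subseteq T_i^\beta$ spanning $g$, and proves $S_g^{\beta+1}\preceq_i S_g^\beta$ via circuit elimination. Since $\prec_i$ is a well-order, $S_g^\beta$ eventually stabilises, so $g$ remains spanned at every limit; acyclicity at limits is free because circuits are finite. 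Your ``fairness schedule'' and ``finite support'' hopes are too vague to substitute for this: without the max-deletion rule the witness sets can oscillate indefinitely and connectivity can genuinely be lost in the limit---Figure~\ref{fig: CBM greedy fails} is precisely an example where the naive choice of outgoing edge destroys spanning after $\omega$ steps. Your augmenting-chain picture is also more than is needed: the paper performs a single add/delete per step and lets the global recursion handle the cascade, which keeps the invariants tractable.
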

At first sight the connection between Theorem \ref{Pym} and Theorem \ref{tree partition} 
seems to be only analogical. In this paper, we show that the connection is actually stronger. There is an abstract matroidal 
``Cantor-Bernstein''-type phenomenon behind these theorems.  Let us first state  a special case of our main result which is 
the generalisation of a theorem by 
Kundu and  Lawler 
(see \cite{kundu1973matroid}) to finitary matroids\footnote{A matroid is called finitary if all of its circuits 
are finite. In the older papers of Higgs, Oxley and others it is also called `independence space'. For a brief introduction to the concept of infinite 
matroids see Section \ref{sec: Prelim}.}:
\begin{restatable}{thm}{infmatOrew}\label{thm: infmatOrew} 
For $ i\in \{ 0,1 \} $, let $ M_i  $ be a finitary matroid on $ E $  and let  $I_i\in 
\mathcal{I}_{M_{0}}\cap \mathcal{I}_{M_{1}}  $. Then there is an $ I\in  \mathcal{I}_{M_0}\cap \mathcal{I}_{M_1} $ 
with $I_i\subseteq 
 \mathsf{span}_{M_{i}}(I) $ for $ 
 i\in \{ 0,1 \} $.
\end{restatable}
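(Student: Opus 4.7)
The plan is to deduce Theorem \ref{thm: infmatOrew} from the classical (finite) Kundu--Lawler theorem via a compactness argument that exploits the finitariness of $M_0$ and $M_1$. The finite case asserts: for finite matroids $N_0, N_1$ on a finite set $F$ and $J_0, J_1 \in \mathcal{I}_{N_0}\cap \mathcal{I}_{N_1}$, there is $J \in \mathcal{I}_{N_0}\cap \mathcal{I}_{N_1}$ with $J_i \subseteq \mathsf{span}_{N_i}(J)$. For every finite $F \subseteq E$, apply this to the restrictions $M_0\upharpoonright F$ and $M_1\upharpoonright F$ with inputs $F \cap I_0$ and $F \cap I_1$ (both common independent in the restrictions) to obtain $J_F \subseteq F$ that is $M_0$- and $M_1$-independent and satisfies $F \cap I_i \subseteq \mathsf{span}_{M_i}(J_F)$ for $i \in \{0,1\}$.

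To assemble the desired global $I \subseteq E$, fix a nonprincipal ultrafilter $\mathcal{U}$ on $[E]^{<\omega}$ extending the filter generated by the tails $\{F' : F' \supseteq F\}$, and define
\[
I := \{\, y \in E : \{F \in [E]^{<\omega} : y \in J_F\} \in \mathcal{U}\,\}.
\]
The common independence of $I$ is immediate from finitariness: any finite $S \subseteq I$ satisfies $\bigcap_{y \in S}\{F : y \in J_F\} \in \mathcal{U}$, hence cofinally many $J_F$ contain $S$, so $S$ is common independent, and therefore so is $I$.

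The main obstacle is to verify the spanning condition $I_i \subseteq \mathsf{span}_{M_i}(I)$ for $i \in \{0,1\}$. For $x \in I_i$ we have $x \in \mathsf{span}_{M_i}(J_F)$ for every $F \ni x$, witnessed by a finite $M_i$-circuit $C \subseteq J_F \cup \{x\}$ through $x$; however this witness depends on $F$ and its elements need not lie in $I$. To address this, I would enhance the finite selection by also choosing, for each $x \in (F \cap I_i) \setminus J_F$, a specific $M_i$-circuit $C^i_{F,x} \subseteq J_F \cup \{x\}$, and then take a simultaneous ultrafilter limit in the compact product $\{0,1\}^E \times \prod_{i \in \{0,1\},\, x \in I_i} \{0,1\}^E$. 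The crux of the proof is to show that the limit $C^i_x$ contains a finite $M_i$-circuit through $x$ all of whose other elements lie in $I$; this is plausibly handled either by taking $C^i_{F,x}$ to be the fundamental circuit of $x$ relative to some fixed auxiliary base of $M_i$ (to make the limiting circuits coherent) or by iteratively enlarging the finite approximations until the circuit witnesses stabilise on a $\mathcal{U}$-large family.
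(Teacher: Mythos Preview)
Your compactness set-up correctly delivers common independence of the limit $I$, but the spanning clause $I_i\subseteq\mathsf{span}_{M_i}(I)$ is not a closed condition on $\{0,1\}^E$, and this is where the argument genuinely breaks. Take $M_0$ to be the cycle matroid of a ray $v_0,v_1,\dots$ with an apex $w$ joined to every $v_k$, let $M_1$ be free, $I_0=\{wv_k:k\ge 0\}$, $I_1=\{v_kv_{k+1}:k\ge 0\}$. A valid Kundu--Lawler output for an arbitrary finite $F$ is $J_F:=(F\cap I_1)$ together with, for each component of the graph $(V,F\cap I_1)$ meeting $F\cap I_0$, the single apex edge $wv_k\in F$ of \emph{largest} index $k$. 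With this rule, for every fixed $j$ the set $\{F:wv_j\in J_F\}$ misses the tail $\{F:F\supseteq\{wv_j,wv_{j+1},v_jv_{j+1}\}\}$, so $wv_j\notin I$ for all $j$ and the ultrafilter limit is $I=I_1$, which does not $M_0$-span $wv_0$. The fundamental $M_0$-circuit of $wv_0$ over $J_F$ has unbounded size as $F$ grows, so the ultrafilter limit of your circuit witnesses $C^{0}_{F,wv_0}$ is the infinite set $I_1\cup\{wv_0\}$, containing no $M_0$-circuit at all. Your two suggested fixes do not help: fundamental circuits relative to a \emph{fixed} auxiliary base of $M_i$ have no reason to lie in $J_F\cup\{x\}$ and hence certify nothing about $J_F$, while ``enlarging $F$ until the witnesses stabilise'' presupposes exactly what is missing---an a priori finite set, depending only on $x$, in which its witness must live.

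The paper does not argue via compactness. It runs the single-edge exchange of the finite Kundu--Lawler proof transfinitely on all of $E$, with one decisive tiebreak: fix a well-order $<$ on $E$ in which $I_1$ lies below $I_0$, and whenever an incoming $e\in I_1$ creates an $M_0$-fundamental circuit, delete its $<$-\emph{maximal} element (necessarily in $I_0\setminus I_1$). Under this rule, for every fixed $g\in I_0$ the minimal $M_0$-spanning witness $S_g^{\gamma}\subseteq J_\gamma$ is nonincreasing in the induced well-order $\prec$ on finite subsets of $E$, hence eventually constant; this stabilisation is precisely what carries the spanning property through limit ordinals. A rescue of your scheme would have to build an equivalent coherence mechanism into the choice of the $J_F$, at which point the ultrafilter is no longer doing the work.
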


The proof for finite matroids by Kundu and Lawler  in \cite{kundu1973matroid} is quite short: If $ I_0 $ spans $ I_1 $ in $ 
M_1 $, then $ I:=I_0 $ is as desired. Otherwise we add an $ e\in I_1 \setminus \mathsf{span}_{M_1}(I_0) $ to $ I_0 $ and if $ I_0+e\notin 
\mathcal{I}_{M_0} 
$, then
delete a suitable $ f\in I_0\setminus I_1 $ in order to restore the $ M_0 $-independence. This 
can be done because the fundamental circuit $ C_{M_0}(e, I_0) $ (if exists) cannot be entirely in $ I_1 $. The resulting set $ 
I_0+e-f $ 
(or $ I_0+e $) still spans  $ I_0 $ in $ M_0 $ and has strictly more edges in $ I_1 $ than $ I_0 $. After finitely many iterations 
of this step the desired $ I $ is obtained.

A naive proof-idea for Theorem \ref{thm: infmatOrew} would be to iterate the step above via transfinite recursion. 
Unfortunately it does not work. To demonstrate this we define  a graph $ G=(V,E) $ as a ray (one-way infinite path) $ v_0, v_1, v_2,\dots $ 
together with an additional vertex $ w $ connected to each vertex of the ray (see Figure \ref{fig: CBM greedy fails}). Let $ M_0 $ be the  cycle 
matroid 
on $ E $ 
corresponding to $ G $ (i.e. the circuits are the edge sets of the graph-theoretic cycles) and let $ M_1 $ be the free matroid on 
$ E $ (i.e. every set is independent in $ M_1 $). We define $ I_0 $ as the set of edges incident with $ w $ and let $ I_1:=E \setminus I_0 $. The 
naive approach might 
proceed as: \[ I_0,\ I_0+v_0v_1-wv_0,\  I_0\cup \{ v_0v_1, v_1v_2 \} \setminus \{ wv_0, w_1 \}, \dots  \] 

\begin{figure}[h]
\centering
\begin{tikzpicture}[scale=2]
\node[inner sep=0pt] (v2) at (-1,-2) {$v_0$};
\node[inner sep=0pt] (v3) at (0,-2) {$v_1$};
\node[inner sep=0pt] (v4) at (1,-2) {$v_2$};
\node[inner sep=0pt] (v5) at (2,-2) {$v_3$};
\node[inner sep=0pt] (v6) at (3,-2) {$v_4$};
\node[inner sep=0pt] at (3.5,-2) {$\dots$};
\node[inner sep=0pt] (v1) at (-1,-3) {$w$};

\draw (v1) edge (v2);
\draw  (v1) edge (v3);
\draw  (v1) edge (v4);
\draw  (v1) edge (v5);
\draw  (v1) edge (v6);

\draw[dashed]  (v2) edge (v3);
\draw[dashed]  (v3) edge (v4);
\draw[dashed]  (v4) edge (v5);
\draw[dashed]  (v5) edge (v6);
\node[dashed] at (3.5,-2.5) {$\dots$};
\end{tikzpicture}
\caption{The failure of the naive approach for infinite matroids.} 
\label{fig: CBM greedy fails}
\end{figure}

\noindent It terminates after $ \omega $ steps and transforms $ I_0 $ into $ I_1 $. Since $ I_1 $ does not span $ I_0 $ in $ 
M_0 $, it fails to provide a desired $ I $. It is easy to see that if we keep $ wv_0 $ and delete only $ wv_1, wv_2, \dots $ (while the incoming edges 
are 
in the same order), then we end up with the same ray together with the edge $ wv_0 $ which is suitable as $ I $. In order to 
prove  
Theorem \ref{thm: infmatOrew}, we are going to 
show in Section \ref{sec: infinit KuLa} that it is always possible to choose the leaving edge in each step in such a way that 
we obtain a solution at the end.  The proof 
of Theorem \ref{thm: infmatOrew}  makes it possible to understand 
quickly the 
main ideas without dealing with technicalities arising in the general form. Basic knowledge about finite matroids is already sufficient to 
understand the paper, all the necessary matroidal background is given in Section \ref{sec: Prelim}.

 In Section \ref{sec: proofmain} we discuss the general form of our main result. Let us denote the class of 
finitary matroids  by $ \mathfrak{F} $, the class of their duals (i.e. cofinitary matroids) by $ \mathfrak{F}^{*} $ and let $ \mathfrak{F}\oplus 
\mathfrak{F}^{*} $ be the class of matroids  that are the 
direct sums of a finitary and a cofinitary matroid (equivalently the matroids with only finitary and cofinitary components).  For a matroid class $ 
\mathfrak{C} $, let $ \mathfrak{C}(E) $ be the set of matroids on edge 
set $ E $ that are in 
class 
$ \mathfrak{C} $. 

\noindent  Our main result generalises Theorem \ref{thm: infmatOrew} in  two ways. On the one 
hand, we replace $ \mathfrak{F} $ by $ \mathfrak{F}\oplus \mathfrak{F}^{*} $. On the other hand, we allow arbitrary edge sets 
instead of  common independent sets (this possibility was conjectured by Bowler) in the following sense:

\begin{restatable}{thm}{infmatOre}\label{thm: infmatOre} 
  For $ i\in \{ 0,1 \} $, let $ M_i\in (\mathfrak{F}\oplus \mathfrak{F}^{*})(E)  $  and  $ F_i\subseteq E $. Then  there exists an  $ 
  F\subseteq E $ such that
  $ \mathsf{span}_{M_i}(F)\supseteq F_i $ and $ \mathsf{span}_{M_i^{*}}(E\setminus F)\supseteq E\setminus F_{1-i} $ for 
  $ i\in \{ 0,1 \} $.
\end{restatable}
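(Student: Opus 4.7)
The plan is to reduce Theorem \ref{thm: infmatOre} to Theorem \ref{thm: infmatOrew} in two stages. A useful initial observation is that the condition $\mathsf{span}_{M_i^*}(E\setminus F)\supseteq E\setminus F_{1-i}$ is equivalent, by matroid duality, to the statement that every $M_i$-circuit contained in $F$ is contained in $F_{1-i}$; in the setting of Theorem \ref{thm: infmatOrew}, where $F$ is common independent, this holds vacuously because $F$ then contains no circuits of either matroid. Theorem \ref{thm: infmatOre} therefore extends Theorem \ref{thm: infmatOrew} by permitting $F$ to carry dependencies, provided they lie inside the corresponding $F_{1-i}$.

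In the first stage I would prove the result for finitary $M_0, M_1$. The desired $F$ is sought in the form $F = I\cup J$ with $I$ common $M_0, M_1$-independent and $J\subseteq F_0\cap F_1$, so that every $M_i$-circuit in $F$ must meet $J\subseteq F_{1-i}$ and a careful choice of $I$ forces the whole circuit to lie inside $F_{1-i}$. The common independent set $I$ is produced by Theorem \ref{thm: infmatOrew} applied to suitable $I_i\subseteq F_i$ chosen to be common independent and to $M_i$-span $F_i$; this requires an interleaved construction, because an arbitrary $M_i$-basis of $F_i$ need not be $M_{1-i}$-independent, and $J$ is built alongside $I$ to absorb the residual elements of $F_0\cap F_1$. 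The careful leaving-element choice illustrated in Figure \ref{fig: CBM greedy fails} is then adapted to maintain both the spanning and the circuit-confinement conditions simultaneously throughout the transfinite recursion.

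In the second stage I would extend to $\mathfrak{F}\oplus\mathfrak{F}^*$. The statement of the theorem is self-dual under $(F, M_i, F_0, F_1)\mapsto (E\setminus F, M_i^*, E\setminus F_1, E\setminus F_0)$, and $\mathfrak{F}\oplus\mathfrak{F}^*$ is closed under duality, so Stage 1 immediately yields the purely cofinitary case. For the mixed case, decompose $M_i = N_i\oplus N_i'$ with $N_i$ finitary on $E_i^f$ and $N_i'$ cofinitary on $E_i^c$, and introduce the auxiliary finitary matroid $\widetilde{M}_i := N_i\oplus(N_i')^*$ together with $\widehat{F}_i := (F\cap E_i^f)\cup(E_i^c\setminus F)$. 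The conditions of Theorem \ref{thm: infmatOre} translate, per index $i$, into analogous finitary conditions on $(\widetilde{M}_i, \widehat{F}_i)$; however, since the canonical decompositions of $M_0$ and $M_1$ generally differ, $\widehat{F}_0$ and $\widehat{F}_1$ are different functions of $F$, and one cannot simply invoke Stage 1 once. Reconciling the two views---effectively, running a single transfinite construction that controls finitary circuits and cofinitary cocircuits of both matroids simultaneously, indexed by the common refinement $\{E^{ff}, E^{fc}, E^{cf}, E^{cc}\}$---is the main obstacle of this stage, and is where the two-sided bookkeeping developed in Stage 1 is most stressed.
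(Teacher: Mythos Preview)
Your opening observation (that $\mathsf{span}_{M_i^{*}}(E\setminus F)\supseteq E\setminus F_{1-i}$ is equivalent to ``every $M_i$-circuit contained in $F$ lies in $F_{1-i}$'') is correct and is exactly the viewpoint the paper adopts internally (see condition~(\ref{TRP 6}) in the proof of Proposition~\ref{prop: family CBw}). However, your Stage~1 ansatz is too restrictive and cannot work as stated. You seek $F=I\cup J$ with $I\in\mathcal{I}_{M_0}\cap\mathcal{I}_{M_1}$ and $J\subseteq F_0\cap F_1$. Take $E=\{a,b,c\}$, let $M_0$ have $\{a,b\}$ as its unique circuit and $c$ a coloop, let $M_1$ be free, and set $F_0=\{c\}$, $F_1=\{a,b\}$. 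Then $c\in F$ is forced (an $M_0$-coloop in $F_0$), $a,b\in F$ are forced ($M_1$ is free), so $F=E$ is the only solution; but $F_0\cap F_1=\varnothing$, so your ansatz would require $F$ to be common independent, which $E$ is not. The point is that $F$ may be forced to contain $M_i$-circuits lying entirely inside $F_{1-i}$, not merely inside $F_0\cap F_1$; there is no common independent ``core'' to hand to Theorem~\ref{thm: infmatOrew}.

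The paper avoids this by not reducing to Theorem~\ref{thm: infmatOrew} at all. It first proves a family statement (Proposition~\ref{prop: family CBw}): given matroids $(M_i)_{i\in\Theta}$ with a packing $(P_i)$ and a covering $(R_i)$, one can find a partition $(T_i)$ with $\mathsf{span}_{M_i}(T_i)\supseteq P_i$ and $\mathsf{span}_{M_i^{*}}(E\setminus T_i)\supseteq E\setminus R_i$. The transfinite recursion there tracks, for finitary $M_i$, the $\prec_i$-least finite witness that $T_i^\beta$ spans each $g\in P_i$ (your ``careful leaving-element choice''), while cofinitary $M_i$ are handled by the dual mechanism of deleting an element of $P_i$ from any offending circuit. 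Theorem~\ref{thm: infmatOre} then follows in two lines by a trick you do not have: after contracting $F_0\cap F_1$ and deleting $E\setminus(F_0\cup F_1)$ so that $F_0\sqcup F_1=E$, apply Proposition~\ref{prop: family CBw} with $\Theta=\{0,1\}$, matroids $M_0$ and $M_1^{*}$ (not $M_1$), and $P_0=R_1=F_0$, $P_1=R_0=F_1$; set $F:=T_0$. Because the proposition already controls both $M_i$ and $M_i^{*}$ for each index, passing to $M_1^{*}$ yields exactly the four desired inclusions. This dualisation of one of the two matroids is what converts the symmetric pair of span/co-span conditions into a packing--covering problem, and it also dissolves your Stage~2 obstacle: the mixed finitary/cofinitary case is handled inside Proposition~\ref{prop: family CBw} by splitting each $M_i$ into its finitary and cofinitary summands and enlarging $\Theta$ accordingly, rather than by trying to reconcile two incompatible ``flips'' $\widehat{F}_0,\widehat{F}_1$ after the fact.
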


We are going to prove the following family variant of Theorem \ref{thm: infmatOre} as well: 

\begin{restatable}{thm}{main}\label{thm: family CB}
  For $  i\in \Theta$, let $ M_i\in (\mathfrak{F}\oplus \mathfrak{F}^{*})(E) $,  $ P_i,R_i \subseteq E $ and for $ e\in E $, let $ N_e\in 
  (\mathfrak{F}\oplus \mathfrak{F}^{*})(\Theta) $.  Then there are $ T_i\subseteq P_i\cup R_i $ for $ i\in 
  \Theta $  such that 
  \begin{enumerate}
  \item $ \mathsf{span}_{M_i}(T_i)\supseteq P_i $;
  \item $\mathsf{span}_{M_i^{*}}(E\setminus T_i)\supseteq E\setminus R_i $;
  \item For every $ e\in E $, the set $ \{  i\in \Theta:\ e\in T_i  \} $ spans $ \{  i\in \Theta:\ e\in R_i  \} $ in $ N_e $;
  \item For every $ e\in E $, the set $  \{  i\in \Theta:\ e\notin T_i  \} $ spans $ \{  i\in \Theta:\ e\notin P_i  \} $ in $ 
  N^{*}_e $.
  \end{enumerate}
\end{restatable}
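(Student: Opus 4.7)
The plan is to reduce Theorem~\ref{thm: family CB} to Theorem~\ref{thm: infmatOre} by a product construction on $E\times\Theta$: the per-index matroids $M_i$ become the direct summands of one ``horizontal'' matroid, the per-edge matroids $N_e$ become those of a ``vertical'' one, and the minimality constraint $T_i\subseteq P_i\cup R_i$ is built in by passing to a suitable subset of the product before applying the base theorem.

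Concretely, set $\mathcal{E}:=E\times\Theta$ and $\mathcal{E}':=\{(e,i)\in\mathcal{E}:e\in P_i\cup R_i\}$, and define
\[
M:=\bigoplus_{i\in\Theta}M_i,\qquad N:=\bigoplus_{e\in E}N_e
\]
on $\mathcal{E}$, placing the summand $M_i$ on $E\times\{i\}$ and the summand $N_e$ on $\{e\}\times\Theta$. Let $\bar M:=M|_{\mathcal{E}'}$, $\bar N:=N|_{\mathcal{E}'}$, and
\[
F_P:=\bigcup_{i\in\Theta}(P_i\times\{i\}),\qquad F_R:=\bigcup_{i\in\Theta}(R_i\times\{i\}),
\]
both contained in $\mathcal{E}'$. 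Direct sums and restrictions both preserve $\mathfrak{F}\oplus\mathfrak{F}^*$ (a direct sum of finitary, resp.\ cofinitary, matroids is finitary, resp.\ cofinitary; and a restriction $M|_A$ is the dual of the contraction $M^*/(E\setminus A)$, so cofinitariness also passes through), so $\bar M,\bar N\in(\mathfrak{F}\oplus\mathfrak{F}^*)(\mathcal{E}')$. Apply Theorem~\ref{thm: infmatOre} to $(\bar M,\bar N,F_P,F_R)$ on $\mathcal{E}'$ to obtain $F\subseteq\mathcal{E}'$ satisfying the four span inclusions, and set $T_i:=\{e\in E:(e,i)\in F\}$; then $T_i\subseteq P_i\cup R_i$ automatically.

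The primal inclusions $\mathsf{span}_{\bar M}(F)\supseteq F_P$ and $\mathsf{span}_{\bar N}(F)\supseteq F_R$ unpack through the direct-sum decompositions of $M$ and $N$ into conditions~(1) and~(3). For the dual ones, $\bar M^*=M^*/(\mathcal{E}\setminus\mathcal{E}')$, so the contraction-span identity turns $\mathsf{span}_{\bar M^*}(\mathcal{E}'\setminus F)\supseteq\mathcal{E}'\setminus F_R$ into $\mathsf{span}_{M^*}(\mathcal{E}\setminus F)\supseteq\mathcal{E}'\setminus F_R$ (using $F\subseteq\mathcal{E}'$). The remaining elements of $\mathcal{E}\setminus F_R$, namely those in $\mathcal{E}\setminus\mathcal{E}'$, lie in $\mathcal{E}\setminus F$ and are therefore trivially in $\mathsf{span}_{M^*}(\mathcal{E}\setminus F)$; decomposing $M^*=\bigoplus M_i^*$ then yields condition~(2). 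The symmetric argument with $N$ in place of $M$ produces condition~(4).

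The one genuine subtlety, and the reason for passing from $\mathcal{E}$ to $\mathcal{E}'$, is securing $T_i\subseteq P_i\cup R_i$: running Theorem~\ref{thm: infmatOre} on the full product would deliver the four span inclusions but allow $F$ to pick up pairs $(e,i)$ with $e\notin P_i\cup R_i$, and such pairs cannot in general be pruned without destroying primal span. Restricting the ground set fixes this at the apparent cost of turning the dual matroids into contractions; the paragraph above shows that the cost is only cosmetic, since the contracted pairs are absent from $F$ by construction and hence present in its complement, so they contribute nothing to either side of the dual inclusion.
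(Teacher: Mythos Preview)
Your argument is correct. The reduction to Theorem~\ref{thm: infmatOre} via direct sums on $E\times\Theta$, followed by the restriction to $\mathcal{E}'$ to enforce $T_i\subseteq P_i\cup R_i$, goes through exactly as you describe; the contraction identity $\bar M^{*}=M^{*}/(\mathcal{E}\setminus\mathcal{E}')$ together with $F\subseteq\mathcal{E}'$ is precisely what is needed to lift the dual span conditions from $\mathcal{E}'$ back to $\mathcal{E}$, and the closure of $\mathfrak{F}\oplus\mathfrak{F}^{*}$ under direct sums and minors is unproblematic.

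The paper proceeds differently: it reduces Theorem~\ref{thm: family CB} directly to Proposition~\ref{prop: family CBw} (the packing/covering version) rather than to Theorem~\ref{thm: infmatOre}. It works on $\Theta\times E$ with a family indexed by $\Theta\sqcup E$: for each $i\in\Theta$ it places a copy of $M_i$ on the column $\{i\}\times E$, and for each $e\in E$ it places a copy of $N_e^{*}$ on the row $\Theta\times\{e\}$, extended elsewhere by loops. The sets $P'_i=\{i\}\times P_i$ together with $P'_e=\{i:e\notin P_i\}\times\{e\}$ then form a genuine packing, and the analogous $R'_i,R'_e$ a covering, so Proposition~\ref{prop: family CBw} applies and the resulting $T'_i$ project to the desired $T_i$. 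Your route instead bundles the whole family into two matroids $\bar M,\bar N$ and invokes the already-proved two-matroid statement; this is arguably cleaner and has the pleasant side effect of showing that Theorems~\ref{thm: infmatOre} and~\ref{thm: family CB} are in fact equivalent (the former being visibly a special case of the latter). The paper's route, by contrast, avoids the restriction-to-$\mathcal{E}'$ manoeuvre and the accompanying contraction-span computation, at the price of a somewhat opaque choice of packing and covering.
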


\noindent The connection between the Theorems \ref{thm: infmatOre} and \ref{thm: family CB} is far from obvious.  It worths to mention 
that it is impossible to extend our results 
above to arbitrary matroids working in set theory ZFC. Indeed, the 
analogue of Theorem  
\ref{thm: infmatOrew} for arbitrary matroids fails under the Continuum Hypothesis even if $ E $ is countable,  $ M_i $ 
is uniform and $ I_i $ is a base of $ M_i $ (take $ U $ and $ U^{*} $ in  
\cite[Theorem 5.1]{erde2019base}).

In the last section (Section \ref{sec: Appl}) we provide an application  related to the following conjecture:

\begin{conj}[Matroid Intersection Conjecture by Nash-Williams, {\cite[Conjecture 
1.2]{aharoni1998intersection}}]\label{MIC}
For every  $ M_0, M_1\in \mathfrak{F}(E) $, there is an  
  $ I \in \mathcal{I}_{M_0}\cap \mathcal{I}_{M_1}$ and a partition $ E=E_0\sqcup E_1 $ such that $ I\cap E_i $ spans $ 
  E_i $ in $ 
 M_i $ for $ i\in \{ 0,1 \} $.  
\end{conj}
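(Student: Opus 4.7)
The Matroid Intersection Conjecture is a famously open problem, and a direct proof via the Cantor–Bernstein machinery of this paper seems out of reach in full generality. The realistic target for an ``application'' is a substantial partial result: MIC for the wider class $\mathfrak{F}\oplus\mathfrak{F}^{*}$, possibly under an auxiliary packing/covering witness hypothesis, obtained by using Theorem \ref{thm: family CB} as a black box.

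The plan is to reformulate MIC symmetrically. Writing $A := I\cap E_0$ and $B := I\cap E_1$, the MIC conclusion is equivalent to producing disjoint $A,B\subseteq E$ with $A\cup B\in \mathcal{I}_{M_0}\cap \mathcal{I}_{M_1}$ and $\mathsf{span}_{M_0}(A)\cup \mathsf{span}_{M_1}(B)=E$. The partition is then recovered as $E_0:=\mathsf{span}_{M_0}(A)$ and $E_1:=E\setminus E_0$: disjointness of $A$ and $B$ together with $A\cup B$ being $M_0$-independent forces $B\cap \mathsf{span}_{M_0}(A)=\emptyset$, which makes $E_0,E_1$ a well-defined partition and guarantees that $I\cap E_0=A$ spans $E_0$ in $M_0$ while $I\cap E_1=B$ spans $E_1$ in $M_1$.

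Concretely, I would apply Theorem \ref{thm: family CB} with $\Theta=\{0,1\}$ and the given finitary matroids as the inputs, setting $T_0=A$ and $T_1=B$. A first attempt is to take $P_0=P_1=R_0=R_1=E$ so that conditions (1)–(2) force each $T_i$ to be an $M_i$-base, and then encode the remaining constraints (disjointness of $A,B$ and common independence of $A\cup B$) via the per-edge auxiliary matroids $N_e$ on the two-element set $\Theta$. Choosing $N_e=U_{1,2}$ would, through (3)–(4), enforce $|\{i\in \Theta: e\in T_i\}|\le 1$ for each $e$, hence the desired disjointness; the joint covering requirement $\mathsf{span}_{M_0}(A)\cup \mathsf{span}_{M_1}(B)=E$ is then automatic from (1).

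The main obstacle, which I expect is the crux of why MIC has stayed open, is the cross-condition that $A\cup B$ must be independent in \emph{both} $M_0$ and $M_1$: this mixes $B$'s interaction with $M_0$ and $A$'s interaction with $M_1$, whereas Theorem \ref{thm: family CB} factorises along the $\Theta$-axis so that condition (1) for index $i$ constrains $T_i$ only inside $M_i$, never $T_{1-i}$. The auxiliary matroids $N_e$ on $\{0,1\}$ carry at most two bits per edge and cannot remember closures that live in the ground matroids. Consequently any clean reduction through Theorem \ref{thm: family CB} is bound to enforce the common-independence condition only under an extra hypothesis — such as a pre-supplied common spanning set, or $M_0\oplus M_1^{*}$ admitting a base — yielding MIC for $\mathfrak{F}\oplus\mathfrak{F}^{*}$ matroids equipped with such a witness rather than Conjecture \ref{MIC} itself; this is what I would expect the application in Section \ref{sec: Appl} to actually deliver.
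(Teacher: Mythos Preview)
The statement you were asked to prove is Conjecture~\ref{MIC}, and the paper does not prove it: it is quoted as an open conjecture of Nash-Williams, with only the countable case known (proved elsewhere, see the references after the conjecture). So there is no ``paper's own proof'' to compare your attempt against, and your recognition that a direct proof is out of reach is correct. Your proposal is therefore not a proof but a speculation about what partial result Section~\ref{sec: Appl} might contain.

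That speculation is off the mark. The application related to Conjecture~\ref{MIC} is Theorem~\ref{MIC extend}, and it does \emph{not} establish a new instance of MIC via Theorem~\ref{thm: family CB}. Instead it \emph{assumes} the countable case of MIC (invoked by taking some $I'\in\mathsf{SM}(M_0,M_1)$ together with a witnessing partition $E=E_0\sqcup E_1$), and then uses Theorem~\ref{thm: infmatOre} --- not Theorem~\ref{thm: family CB} --- on the minors $M_i\upharpoonright E_i$ and $M_{1-i}.E_i$ to replace $I'$ by a strongly maximal $I$ that additionally spans a prescribed $J\in\mathcal{I}_{M_0}\cap\mathcal{I}_{M_1}$ in both matroids. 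The conclusion is that $\mathsf{SM}(M_0,M_1)$ is cofinal in $(\mathcal{I}_{M_0}\cap\mathcal{I}_{M_1},\trianglelefteq_{M_0,M_1})$, not a new case of the conjecture itself.

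Your attempted reduction of MIC to Theorem~\ref{thm: family CB} with $\Theta=\{0,1\}$ and $N_e=U_{1,2}$ has the gap you yourself identify: the cross-independence condition $A\cup B\in\mathcal{I}_{M_0}\cap\mathcal{I}_{M_1}$ cannot be encoded, since condition~(1) for index $i$ constrains $T_i$ only in $M_i$ and the per-edge matroids $N_e$ on two points carry no closure information from $M_0,M_1$. This is a genuine obstruction, and the paper makes no claim to overcome it.
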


\noindent The special case of the conjecture where $ E $ is assumed to 
be countable was proved in \cite{joo2020MIC}. This was then generalised 
 to the case where $ E $ is still countable but $ \mathfrak{F}(E) $ is replaced by $ (\mathfrak{F}\oplus 
\mathfrak{F}^{*})(E) $ (see \cite[Theorem 1.4]{joo2021packingcovering}).  

A maximal sized common independent set of two finite matroids can always be chosen in such a way that it spans a prescribed common 
independent set in both  matroids.  Indeed, if a common independent set is not a largest such a set, then the well-known  `augmenting path' method 
by Edmonds gives a new common independent set which is larger by one and spans the original in both matroids  (see in
\cite{edmonds2003submodular}). Iterating such augmenting paths starting with the prescribed common independent set provides a desired largest 
common independent set.

  The question can be phrased with respect to Conjecture \ref{MIC} by replacing `maximal sized' by `strongly maximal' which we define as 
  satisfying the property described in Conjecture \ref{MIC}. The same argument for the positive answer does not work because finitely many 
  iteration of augmenting paths does not lead to a strongly maximal one in general. Even so, we can answer the question affirmatively based on our 
  main results. Let us denote the set of strongly maximal common independent sets  by $ 
 \mathsf{SM}(M_0, M_1) $.   For $ I, J\in  \mathcal{I}_{M_0}\cap \mathcal{I}_{M_1}$, let  $ J\trianglelefteq_{M_0, M_1} 
 I $ iff $ J\subseteq \mathsf{span}_{M_0}(I)\cap 
 \mathsf{span}_{M_1}(I) $.

\begin{restatable}{thm}{cofinal}\label{MIC extend}
  Let $ E $ be countable and let $ M_i\in (\mathfrak{F}\oplus\mathfrak{F}^{*})(E) $  for $ i\in \{ 0,1 \} $.  Then $ \mathsf{SM}(M_0,M_1) $  is 
  cofinal but not necessarily upward closed in $ (\mathcal{I}_{M_0}\cap \mathcal{I}_{M_1}, \trianglelefteq_{M_0, M_1}) $.  
\end{restatable}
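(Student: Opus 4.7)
The claim has two parts: cofinality and failure of upward closure. For cofinality, let $J\in\mathcal{I}_{M_{0}}\cap\mathcal{I}_{M_{1}}$. My plan is to combine the main theorem of this paper with the countable matroid intersection result \cite[Theorem 1.4]{joo2021packingcovering}. The latter, applied to $(M_{0},M_{1})$ in $\mathfrak{F}\oplus\mathfrak{F}^{*}$ with countable $E$, yields a strongly maximal common independent set $I^{*}$ with a witnessing partition $E=A\sqcup B$. I then apply Theorem~\ref{thm: infmatOre} with the inputs $F_{0}:= J\cup(I^{*}\cap A)$ and $F_{1}:= J\cup(I^{*}\cap B)$. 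The output $F$ spans $J$ in both matroids by the primal conditions of Theorem~\ref{thm: infmatOre}, and the dual span conditions $\mathsf{span}_{M_{i}^{*}}(E\setminus F)\supseteq E\setminus F_{1-i}$ force $F$ to be common independent: an $M_{i}$-circuit $C\subseteq F$ together with an element $e\in C\cap(F\setminus F_{1-i})$ would give $e\notin\mathsf{span}_{M_{i}}(F\setminus\{e\})$ by the dual condition, contradicting $e\in\mathsf{span}_{M_{i}}(C\setminus\{e\})\subseteq\mathsf{span}_{M_{i}}(F\setminus\{e\})$; hence $C\subseteq F_{1-i}\subseteq J\cup I^{*}$, which is impossible since this union is $M_{i}$-independent.

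The main obstacle is to promote $F$ from common independent and $J$-spanning to strongly maximal. My plan is to show that the inherited partition $A\sqcup B$ already witnesses strong maximality for $F$, by verifying that $F\cap A$ spans $A$ in $M_{0}$ and $F\cap B$ spans $B$ in $M_{1}$. The inclusion $I^{*}\cap A\subseteq F_{0}\subseteq\mathsf{span}_{M_{0}}(F)$ together with the fact that $I^{*}\cap A$ already spans $A$ in $M_{0}$ gives $A\subseteq\mathsf{span}_{M_{0}}(F)$, and the dual conditions should imply that elements of $F\cap B$ are $M_{0}$-redundant for spanning $A$, so that in fact $A\subseteq\mathsf{span}_{M_{0}}(F\cap A)$. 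This redundancy step is the delicate point; if it proves too brittle in the direct form above, an alternative is to apply Theorem~\ref{thm: family CB} with $\Theta=\{0,1\}$, matroids $M_{0},M_{1}$, and auxiliary matroids $N_{e}$ on $\Theta$ (e.g.\ parallel matroids coupling the two partition sides) chosen so that the resulting pair $(T_{0},T_{1})$ directly parametrises a strongly maximal common independent set spanning $J$.

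For the failure of upward closure, I would construct an explicit countable counterexample. A natural candidate is built from the ray-plus-hub graph of Figure~\ref{fig: CBM greedy fails}: take $M_{0}$ to be its cycle matroid and $M_{1}$ a tailored cofinitary companion matroid on the same ground set. In such a setting one can exhibit a strongly maximal $I$ with a rigid partition witness and a common independent $J$ satisfying $I\trianglelefteq_{M_{0},M_{1}}J$ for which no partition witnesses strong maximality. The failure reflects the fact that $\trianglelefteq_{M_{0},M_{1}}$ records only span-inclusion, not the allocation of spanning contributions between the two partition sides, so moving along $\trianglelefteq_{M_{0},M_{1}}$ can destroy the local base-of-restriction structure required for strong maximality.
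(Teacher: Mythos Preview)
Your cofinality argument has a genuine error and an acknowledged gap. The error is the claim that $J\cup I^{*}$ is $M_i$-independent: there is no reason for the union of two common independent sets to be independent in either matroid (take $M_0=M_1=U_{1,2}$, $J=\{a\}$, $I^{*}=\{b\}$). Hence your deduction that an $M_i$-circuit $C\subseteq F$ must satisfy $C\subseteq F_{1-i}\subseteq J\cup I^{*}$ does not yield a contradiction, and the common independence of $F$ is unproved. The acknowledged gap is the strong maximality of $F$: your ``redundancy'' claim, that elements of $F\cap B$ can be dropped when $M_0$-spanning $A$, is not supported by the dual condition, because the dual condition $\mathsf{span}_{M_0^{*}}(E\setminus F)\supseteq E\setminus F_1$ says nothing about elements of $F\cap F_1$, and in particular nothing about elements of $I^{*}\cap B\subseteq F_1$ that may lie in $F$. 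The fallback via Theorem~\ref{thm: family CB} is not worked out.

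The missing idea, and what the paper does, is to \emph{localise} the application of Theorem~\ref{thm: infmatOre} to each side of the witnessing partition. With $E=E_0\sqcup E_1$ witnessing $I'\in\mathsf{SM}(M_0,M_1)$, one applies Theorem~\ref{thm: infmatOre} on $E_i$ to the pair $(M_i\upharpoonright E_i,\;M_{1-i}.E_i)$ with inputs $I'\cap E_i$ and $J\cap E_i$. The output $I_i$ is then automatically a \emph{base} of $M_i\upharpoonright E_i$ (so $I_i$ spans $E_i$ in $M_i$) and independent in $M_{1-i}.E_i$ (so $I_0\sqcup I_1$ is independent in $M_{1-i}$); together these give both common independence and strong maximality of $I:=I_0\sqcup I_1$ with the \emph{same} partition $E_0\sqcup E_1$, and the spanning of $J$ follows easily. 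The restriction/contraction to the partition pieces is precisely what forces the output to respect the partition --- something your global application cannot guarantee.

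For the failure of upward closure, your candidate is not a construction but a hope: you neither specify the cofinitary $M_1$ nor produce the two comparable sets. The paper's example is quite different and entirely finitary: a bipartite double ray with two pendant edges $v_0w_0,v_1w_1$, the two transversal matroids $M_0,M_1$, and two perfect matchings $I_0,I_1$ of the double ray covering the same vertices (hence $I_0\trianglelefteq I_1\trianglelefteq I_0$), where $I_1$ admits a K\H{o}nig-type vertex cover and $I_0$ does not.
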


\section{Preliminaries}\label{sec: Prelim}
Rado asked in 1966 if there is an infinite generalisation of matroids  preserving the key concepts 
(like duality and minors) of 
the finite theory. Based on some early results of 
Higgs \cite{higgs1969matroids} and Oxley  \cite{oxley1992infinite},   Bruhn, Diestel, Kriesell, Pendavingh and Wollan  answered 
the 
question 
affirmatively and gave a set of cryptomorphic axioms for infinite matroids, generalising the usual independent set-, bases-, 
circuit-, closure- and 
rank-axioms of finite matroids (see \cite{bruhn2013axioms}). They showed that several fundamental  facts of the theory of finite 
matroids are preserved in the infinite case. It opened the door  
for a more systematic investigation of infinite matroids.
An $ M=(E, \mathcal{I}) $ is a matroid (also called B-matroid)  if $ \mathcal{I}\subseteq 
\mathcal{P}(E) $ with
\begin{enumerate}
[label=(\Roman*)]
\item\label{item axiom1} $ \varnothing\in  \mathcal{I} $;
\item\label{item axiom2} $ \mathcal{I} $ is downward closed;
\item\label{item axiom3} For every $ I,J\in \mathcal{I} $ where  $J $ is $ \subseteq $-maximal in $ \mathcal{I} $ and $ I $ is 
not, there 
exists an
 $  e\in 
J\setminus I $ such that
$ I+e\in \mathcal{I} $;
\item\label{item axiom4} For every $ X\subseteq E $, any $ I\in \mathcal{I}\cap 
\mathcal{P}(X)  $ can be extended to a $ \subseteq $-maximal element of 
$ \mathcal{I}\cap \mathcal{P}(X) $.
\end{enumerate}
 For a finite~$E$, axioms  \ref{item axiom1}-\ref{item axiom3}  are equivalent to the usual axiomatization of finite
matroids in terms of 
independent sets  (while \ref{item axiom4} is automatically true). 

The terminology and the basic facts we will use are  well-known for 
finite matroids. The elements of~$\mathcal{I}$ are called \emph{independent} sets  while the sets 
in $\mathcal{P}(E) \setminus \mathcal{I}$ are 
\emph{dependent}. The maximal independent sets are the \emph{bases} and  the minimal dependent sets are the 
\emph{circuits} of the matroid. Every dependent set contains a circuit (which fact is not obvious if $ E $ is infinite). A singleton 
circuit is called 
a \emph{loop}. The \emph{components} of a matroid are the connected components of the hypergraph of its circuits on $ E $.     
The \emph{dual} of  
matroid~${M}$   is the 
matroid~${M^*}$  on the same edge set whose bases are the complements of 
the bases of~$M$. By the deletion of an $ X\subseteq E $  we obtain the matroid
$ M-X:=(E\setminus X, \{ Y\in \mathcal{I}:  Y\subseteq E\setminus X \}) $ and the contraction of $ X $ gives $M/X:= 
(M^{*}-X)^{*} $. If $ I $ is independent in $ M $  but $ I+e $ is dependent for some $ e\in E\setminus I $  then there is a unique 
circuit   $ C_M(e,I) $ of $ M $ through $ e $ contained in $ I+e $ which is called the \emph{fundamental circuit} of $ e $ on $ I $ in $ M $.   We 
say~${X 
\subseteq E}$ spans~${e \in E}$ in matroid~$M$ if either~${e \in X}$ or there exists a circuit~${C 
\ni e}$ with~${C-e \subseteq X}$. 
We denote the set of edges spanned by~$X$ in~$M$ by~$\mathsf{span}_{M}(X)$. 
A matroid is called \emph{finitary} if all of its circuits are finite. A matroid is 
\emph{cofinitary} if its dual is finitary. 
If $ C_1$ and $C_2 $ are 
circuits with $ e\in C_1\setminus C_2 $ and $f\in  C_1\cap C_2 $, then  there is a circuit $ C_3 $ with $e\in C_3\subseteq 
C_1\cup C_2-f $. This fact is called (strong) \emph{circuit elimination}. For more information about 
infinite matroids we refer to  \cite{nathanhabil}.

\section{The infinite generalisation of the  Kundu-Lawler theorem}\label{sec: infinit KuLa}

\infmatOrew*

\begin{proof}
We may assume without loss of generality that $ E $ is the disjoint union of $ I_0 $ and $  I_1 $ 
since otherwise we can simply contract $ I_0 \cap I_1 $ and delete $ E\setminus (I_0 \cap I_1) $ in both matroids. Let 
$ < $ be a well-order on $ E $ in which $ I_1 $ is an initial segment, i.e. $ e< f$ for every $ e\in I_1 $ and $ f\in 
I_0 $. From now on, the 
maximum of a finite subset of $ E $ is interpreted corresponding to $ <$. We define a well-order 
$ \prec $ on the set $ E^{<\aleph_0} $ of finite subsets of $ E  $. For $ X\neq Y\in  E^{<\aleph_0} $ 
let $ X\prec Y $ iff  one of the following holds:
\begin{itemize}
\item $ X=\varnothing $,
\item $ \max X<\max Y $,
\item $ \max X=\max Y =:z$ and $ X-z\prec Y-z $.
\end{itemize}
It is not too hard to check that $ \prec $ is indeed a well-order.

\begin{obs}\label{obs:common part}
If $ X\prec Y $ then $  X+z\prec Y+z  $ for every $ z\in I_0\cup I_1 $.
\end{obs}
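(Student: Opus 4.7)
The plan is a straightforward induction on $|X|+|Y|$ exploiting the recursive definition of $\prec$. I would first fix the natural reading, consistent with the way the observation is applied later in the construction, that $z\notin X\cup Y$; if $z$ lies in both sets then $X+z=X$ and $Y+z=Y$ so there is nothing to prove, while if $z$ lies in exactly one of them small counterexamples exist (e.g.\ $X=\{2\}$, $Y=\{3\}$, $z=3$), so the statement really is about augmenting both sets by a genuinely fresh element.

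With that convention, I would case-split according to which of the three clauses of the definition of $\prec$ witnesses $X\prec Y$. In the clause $X=\varnothing$ (so $Y\neq\varnothing$), the only comparison needed is $z$ against $\max Y$: if $z<\max Y$ then $\max(X+z)=z<\max Y=\max(Y+z)$; if $z>\max Y$, both augmented maxima equal $z$ and subtracting $z$ from each reduces the task to the true statement $\varnothing\prec Y$. In the clause $\max X<\max Y$, I would separately handle $z$ below $\max X$, strictly between $\max X$ and $\max Y$, and above $\max Y$: the first two yield $\max(X+z)<\max(Y+z)$ directly, and the third reduces, after subtracting $z$, to the assumed $X\prec Y$.

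The only subcase requiring a genuine appeal to induction is the clause $\max X=\max Y=:w$ with $X-w\prec Y-w$. If $z>w$, subtracting $z$ from both augmented sets leaves $X\prec Y$, which is the hypothesis. If $z<w$, then both augmented maxima equal $w$ and $(X+z)-w=(X-w)+z$ while $(Y+z)-w=(Y-w)+z$; because $|X-w|+|Y-w|<|X|+|Y|$ and $X-w\prec Y-w$, the inductive hypothesis applied to the pair $(X-w,Y-w)$ with the same fresh element $z$ delivers $(X-w)+z\prec(Y-w)+z$, which is exactly the third clause of the definition applied to $X+z$ and $Y+z$.

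I do not anticipate any real obstacle beyond routine bookkeeping: making the subcases exhaustive and being careful that in each subcase the hypothesis $z\notin X\cup Y$ rules out the boundary equalities $z=\max X$, $z=\max Y$, $z=w$, so that the strict comparisons do the work. Once those are excluded, each branch is an immediate unpacking of the recursion defining $\prec$, and no matroidal input is used anywhere.
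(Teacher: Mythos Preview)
Your argument is correct. The paper states this as a bare observation without proof, so there is no approach to compare against; your induction on $|X|+|Y|$ with the three-way case split on the defining clauses of $\prec$ is a clean way to fill the gap.

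Your remark that the statement, read literally, requires $z\notin X\cup Y$ is accurate and worth recording: your example $X=\{2\}$, $Y=\{3\}$, $z=3$ does break the conclusion, and in both places where the paper invokes the observation the element $z$ is drawn from the intersection of the two target sets (so it is fresh for the pair of differences being compared). A slightly shorter route, if you want one, is to first verify the equivalent description $X\prec Y\iff X\neq Y$ and $\max(X\bigtriangleup Y)\in Y$; once that is in hand, $z\notin X\cup Y$ gives $(X+z)\bigtriangleup(Y+z)=X\bigtriangleup Y$ and the conclusion is immediate. Your approach trades this reformulation for a little more case analysis, which is a fair exchange.
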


  Let $ \left\langle E_\beta: \beta<\alpha \right\rangle  $ be a sequence of subsets of $ E $ where $ 
\alpha $ is a limit ordinal. If

\[ \bigcup_{\gamma<\alpha}\bigcap_{\beta>\gamma}E_\beta= 
\bigcap_{\gamma<\alpha}\bigcup_{\beta>\gamma}E_\beta,\]
then we call this set the limit of the sequence and denote it by $ \lim\left\langle E_\beta: \beta<\alpha \right\rangle  $. We apply transfinite 
recursion starting with 
  $ J_0:=I_0 $. Suppose that $ J_\alpha \in \mathcal{I}_{M_0}\cap \mathcal{I}_{M_1} $ is defined and 
spans $ I_0 $ in $ M_0 $. If $ J_\alpha $ spans $ I_1 $ in $ M_1 $ as well, then $ I:=J_\alpha $ is as desired. Otherwise let $ e\in I_1\setminus 
\mathsf{span}_{M_1}(J_\alpha) $ be arbitrary and let

\[ J_{\alpha+1}:=\begin{cases} J_\alpha+e &\mbox{if } \text{ it is independent in }M_0 \\
J_\alpha+e-\max C_{M_0}(e, J_\alpha) & \mbox{ otherwise}.  
\end{cases}    \]

Note that  $ e\in I_1 \setminus I_0 $ and $ \max C_{M_0}(e, J_\alpha) \in I_0 \setminus I_1 $. In  limits steps we take the 
limit of the earlier 
members (which is well-defined). Clearly, $ J_\alpha \in \mathcal{I}_{M_0}\cap \mathcal{I}_{M_1} $ remains true for limit ordinals because a 
finite circuit cannot show up first in a limit step.  It is enough to 
show that $ J_\beta\subseteq  \mathsf{span}_{M_0}(J_\alpha) $ for $ \beta<\alpha $. Let $ \beta $ and  $ g\in I_\beta $ be 
fixed and 
suppose for a contradiction that there is a (smallest) $ \alpha $ with  $ g\notin \mathsf{span}_{M_0}(J_\alpha) $. It is 
obvious from the definition 
of successor steps that $  \alpha $ must be a limit ordinal.  For $\gamma\in [\beta, \alpha) $, let $ S_{\gamma} $ be the 
unique minimal subset of $ 
J_\gamma $ 
that spans $ g $ in $ M_0 $.  It is enough to show that $ 
S_{\gamma+1} \preceq S_\gamma $ for $\gamma\in [\beta, \alpha) $. Indeed, since there is no infinite $ \prec $-decreasing 
sequence, $ S_\gamma 
$ is the same set $ S $ for every large enough $ \gamma $. But then $ S\subseteq J_\alpha $ and it spans $ g $ in $ M_0 $, a 
contradiction. 

Let $\gamma\in [\beta, \alpha) $ be fixed. We may assume that $ S_{\gamma+1}\neq S_\gamma $ since otherwise we are 
done. Suppose first 
that $ S_\gamma=\{ g \} $. Then $ g\notin  S_{\gamma+1} $ because otherwise $ S_\gamma=S_{\gamma+1}=\{ g \} $. 
But then there is an edge $ e $ such that $  g=\max C_{M_0}(e, J_\alpha) $ and $ J_{\gamma+1}=J_\gamma+e-g 
$. Therefore   \[ S_{\gamma+1}=C_{M_0}(e, J_\gamma)-g \prec\{ g \}= S_\gamma. \]  
If $ S_\gamma\neq\{ g \} $, then $ S_\gamma=C_{M_0}(g,J_{\gamma})-g $ and there is an edge $ e $ such that $ 
J_{\gamma+1}=J_\gamma+e-\max C_{M_0}(e, J_\gamma) $ with $ \max C_{M_0}(e, J_\gamma)\in 
C_{M_0}(g,J_{\gamma})-g $. By strong circuit elimination we know that  \[  C_{M_0}(g, J_{\gamma+1})\subseteq 
C_{M_0}(g, J_{\gamma}) \cup C_{M_0}(e, J_\gamma)- \max C_{M_0}(e, J_\gamma) \]
and therefore \[ S_{\gamma+1}\subseteq S_\gamma \cup C_{M_0}(e, J_\gamma) - \max C_{M_0}(e, J_\gamma).\]

It follows that $ S_{\gamma+1}\setminus S_{\gamma}\prec S_\gamma \setminus S_{\gamma+1} $ because $ \max 
C_{M_0}(e, J_\gamma)\in S_{\gamma+1}\setminus S_{\gamma} $ is $ < $-larger than any element of $ S_\gamma 
\setminus S_{\gamma+1} $. Finally, this implies
$ S_{\gamma+1}\prec S_\gamma $ by applying Observation \ref{obs:common part} repeatedly with the edges in $ 
S_{\gamma}\cap S_{\gamma+1} $.
\end{proof}

\section{The proof of the main results}\label{sec: proofmain}
We are going to derive Theorems \ref{thm: infmatOre} and \ref{thm: family CB} from the following statement: 
 
\begin{prop}\label{prop: family CBw}
For $  i\in \Theta$, let $ M_i\in (\mathfrak{F}\oplus \mathfrak{F}^{*})(E) $ and   $ P_i,R_i \subseteq E $  such that the sets $ P_i $ form a 
packing and the sets $ R_i $ form a covering, i.e. $ P_i\cap 
  P_j=\varnothing $ for $ i\neq j$ and $ \bigcup_{i\in \Theta}R_i=E $.  Then there are $ T_i\subseteq P_i\cup R_i $ for $ i\in 
  \Theta $ forming a partition of $ E $ such that $ \mathsf{span}_{M_i}(T_i)\supseteq P_i $ and 
  $\mathsf{span}_{M_i^{*}}(E\setminus T_i)\supseteq E\setminus R_i $.  
\end{prop}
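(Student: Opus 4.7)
My plan is to adapt the transfinite recursion from the proof of Theorem~\ref{thm: infmatOrew} to the family setting, with both span conditions enforced simultaneously. I would start from the initial packing $T_i^0 := P_i$ for every $i \in \Theta$; this satisfies $\mathsf{span}_{M_i}(T_i^0) \supseteq P_i$ trivially. The recursion then iteratively ``fixes'' one of three kinds of violation: (a) an edge $e \in E$ not contained in any $T_i^\alpha$; (b) an edge $e \in P_i$ failing $e \in \mathsf{span}_{M_i}(T_i^\alpha)$; or (c) an edge $e \in E \setminus R_i$ failing $e \in \mathsf{span}_{M_i^*}(E \setminus T_i^\alpha)$. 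At limit stages one takes componentwise limits of the sequences $(T_i^\alpha)_\alpha$ in the sense used in Theorem~\ref{thm: infmatOrew}.

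Each fix should be a local swap analogous to the Kundu--Lawler step. For~(a), since $(R_i)_{i \in \Theta}$ is a covering, one simply moves $e$ into some $T_j^\alpha$ with $e \in R_j$. For~(b), one adds $e$ to $T_i^\alpha$ and ejects the ``$\max$''-element of the fundamental circuit $C_{M_i}(e, T_i^\alpha)$, which is finite because the component of $M_i$ containing $e$ is finitary; the ejected element is then reassigned to a suitable $T_k^\alpha$. Case~(c) is handled dually: on the cofinitary component of $M_i$ containing $e$ the dual $M_i^*$ is finitary, so a fundamental $M_i^*$-cocircuit through $e$ in $E \setminus T_i^\alpha$ can be used analogously. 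Every swap therefore affects only finitely many edges and only a few of the $T_i$'s, which makes the limits at limit ordinals well-behaved.

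Termination should follow by generalizing the $\prec$-order argument from Theorem~\ref{thm: infmatOrew}. One well-orders $E$, defines $\prec$ on finite subsets as in that proof, and associates to each currently violated condition a minimal witnessing subset of the relevant $T_i^\alpha$ (or of its complement, for type~(c)). Each swap should strictly $\prec$-decrease the witness of the violation being addressed; strong circuit elimination in $M_i$ (respectively $M_i^*$) on the appropriate component should ensure that the witnesses of other currently active violations are either unchanged or also $\prec$-smaller. Since $\prec$ is a well-order, no single violation can persist through infinitely many swaps, forcing the recursion to halt at a state with no violations remaining, which is exactly the desired partition.

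The principal obstacle I foresee is the coupling across the family of indices: a single swap of type~(b) or~(c) involves two or three distinct $T_i$'s, and may create new violations for other indices or for conditions that were not the immediate target. Ensuring global $\prec$-monotonicity of the state will require a careful choice of which violation to address next, plausibly through a lexicographic combination of per-index $\prec$-orderings together with a unified accounting for the three violation types and for the side constraints $T_i \subseteq P_i \cup R_i$. Making this bookkeeping simultaneously consistent with the packing-covering structure and strictly decreasing at every step is where I expect the technical heart of the proof to lie.
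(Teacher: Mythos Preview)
Your overall framework is right---transfinite recursion from $T_i^0 = P_i$, Kundu--Lawler-style swaps, limits at limit ordinals, and a $\prec$-order on finite sets---but the organisation has a genuine gap, and it is exactly the one you flag at the end. The paper does \emph{not} treat your (a), (b), (c) symmetrically as violations to be repaired; it maintains (b) and the finitary half of (c) as \emph{invariants} that are never allowed to fail. After reducing to the case where the $R_i$ partition $E$, $P_i\cap R_i=\varnothing$, $P_i\in\mathcal{I}_{M_i}$, and each $M_i$ is purely finitary or purely cofinitary, the recursion enforces that $T_i^\beta\cap P_i$ is $\subseteq$-decreasing and $T_i^\beta\cap R_i$ is $\subseteq$-increasing in $\beta$. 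Edges only ever leave $T_i$ from the $P_i$-side and only ever enter on the $R_i$-side; nothing is ``reassigned''. This monotonicity is what makes the limits well-defined and, together with the fact that successive families differ, is the \emph{entire} termination argument---there is no global $\prec$-decrease.

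In particular your fix for (b) is incoherent as stated: if $e\in P_i$ with $e\notin\mathsf{span}_{M_i}(T_i^\alpha)$ then $T_i^\alpha+e$ contains no $M_i$-circuit through $e$, so there is no fundamental circuit to eject from. The $\prec_i$-argument (a separate well-order on each $P_i\cup R_i$ with $R_i$ an initial segment, not one global order on $E$) serves only to show that for finitary $M_i$ the invariant $P_i\subseteq\mathsf{span}_{M_i}(T_i^\beta)$ survives limit stages: the minimal witness $S_g^\beta$ for each $g\in P_i$ is $\preceq_i$-nonincreasing, hence eventually constant, hence present in the limit. For cofinitary $M_i$ this invariant survives limits automatically. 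The only actions at a successor step are: if some cofinitary $T_j^\beta$ contains an $M_j$-circuit $C\not\subseteq R_j$, delete an element of $P_j\cap C$; otherwise pick an uncovered edge $e$, add it to the unique $T_k$ with $e\in R_k$, and if $M_k$ is finitary and this creates a circuit $C\not\subseteq R_k$, simultaneously delete its $<_k$-maximal element (which necessarily lies in $P_k$). The cross-index coupling you worry about is avoided by this asymmetric design rather than resolved by a lexicographic bookkeeping scheme.
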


\begin{proof}
We may assume without loss of generality by ``trimming''  that the sets $ R_i $  form a partition of $ E $. We can 
also assume 
that $ P_i\in \mathcal{I}_{M_i} $ since otherwise we replace $ P_i $ with a maximal $ M_i $-independent subset of it. It is 
enough 
to consider the case 
where $ P_i\cap R_i=\varnothing $ for $ i\in \Theta $. Indeed, if it is not the case, then we contract $ P_i\cap R_i $ 
 and delete $ P_j\cap R_j $ for $ j\neq i $ in $ M_i $.  Finally, by decomposing each $ M_i $ into a 
finitary and a cofinitary matroid (which we extend to $ E $ by loops) and partition the sets $ R_i $ and $ P_i $ accordingly,  
it is enough to deal with matroid families where each $ M_i $  is either finitary or cofinitary. 

Let $ <_i $ be a well-order on $ P_i\cup R_i $ where $ R_i$  is an initial segment. Then $ <_i $ induces a well-order $ 
\prec_i $ on the set $ \left[P_i\cup R_i \right]^{<\aleph_0} $ the same way as in Section \ref{sec: infinit KuLa}.

\begin{obs}\label{limit obs}
Suppose that $ E_\alpha $ is the limit of  $ \left\langle E_\beta: \beta<\alpha \right\rangle  $.
\begin{enumerate}
[label=(\roman*)]
\item If $ E_\alpha $ contains an  $ M_i $-circuit $ C\not \subseteq R_i $ where $ M_i $ is finitary, then so does $ E_\beta $ for 
every large enough $ \beta<\alpha $;
\item If $g\in \mathsf{span}_{M_i}(E_\beta) $ for $ \beta<\alpha $ where $ M_i $ is cofinitrary, then $ 
g\in \mathsf{span}_{M_i}(E_\alpha) $.
\end{enumerate}
\end{obs}

To construct the desired partition $ (T_i: i\in \Theta) $, we apply transfinite 
recursion.  Let $ T_i^{0}:=P_i $ for $ i\in \Theta $. Suppose that $ 
T_i^{\beta} $ is defined 
for $ \beta<\alpha $ and $ i\in \Theta $ satisfying the following properties:

\begin{enumerate}
\item\label{TRP 1}$T_i^{\beta}\cap T_j^{\beta}=\varnothing$ for $ i\neq j\in \Theta $;
\item\label{TRP 2} $ T_i^{\beta}\subseteq P_i\cup R_i $;
\item\label{TRP 3} $ T_i^{\beta}\cap P_i $ is $ \subseteq $-decreasing  and $ T_i^{\beta}\cap R_i $ is $ \subseteq 
$-increasing  
in $ 
\beta $;
\item\label{TRP 4} $ T_i^{\beta}=\lim\left\langle T_i^{\delta}: \delta<\beta  \right\rangle $ if $ \beta $ is a limit 
ordinal;
\item\label{TRP 5} $ \mathsf{span}_{M_i}(T_i^{\beta}) \supseteq P_i$;
\item\label{TRP 6} For every finitary $ M_i $, each $ M_i $-circuit $ C\subseteq T_i^{\beta} $ is a subset of $ R_i $;
\item\label{TRP 7} For every finitary $ M_i $ and $ g\in P_i $, the $ \prec_i $-smallest finite $ S^{\beta}_g \subseteq 
T_i^{\beta}$ that is 
witnessing 
$g\in  \mathsf{span}_{M_i}(T_i^{\beta}) $ is a $ \preceq_i $-decreasing function of $ \beta $;
\item\label{TRP 8} $ (T_i^{\delta}: i\in \Theta)\neq (T_i^{\delta+1}: i\in \Theta) $ for $ \delta+1<\alpha $.
\end{enumerate}

Note that condition (\ref{TRP 6}) is  a rephrasing of ``$ \mathsf{span}_{M_i^{*}}(E\setminus T_i^{\beta})\supseteq 
E\setminus R_i $ for finitary $ M_i $''. Assume first that $ \alpha $ is a limit ordinal. Then conditions (\ref{TRP 2}) and 
(\ref{TRP 3}) guarantee that 
$ T_i^{\alpha}:=\lim \left\langle T_i^{\beta}: \beta<\alpha  \right\rangle $ is well-defined. Preservation of conditions 
(\ref{TRP 1})-(\ref{TRP 4}) and (\ref{TRP 8}) is straightforward.   The restriction of condition  (\ref{TRP 5}) to cofinitary 
matroids and 
condition 
(\ref{TRP 6}) are kept 
by Observation 
\ref{limit obs}.
To check condition  (\ref{TRP 5}) for a finitary $ M_i $, let 
$ g\in P_i $ be arbitrary. Since 
$ \preceq_i $ is a well-order, it follows from condition (\ref{TRP 7}) that there is an $ S_g $ such that  $ S^{\beta}_g =S_g$  
for all large enough $ \beta<\alpha $. But 
then $ S_g\subseteq T_i^{\alpha} $ from which $ g\in \mathsf{span}_{M_i}(T_i^{\alpha}) $ follows. Furthermore, 
clearly $ S_g^{\alpha}=S_g $ since a finite set which is $ \prec_i $-smaller than $ S_g $ and $ M_i $-spans $ g $  would have 
appeared already before the limit. 

Suppose now that $ \alpha=\beta+1 $. If $ \bigcup_{i\in \Theta}T_i^{\beta} \supseteq E $ and the analogue of condition 
(\ref{TRP 6}) for the cofinitary $ M_i $ holds, then  $( T_i^{\beta}: i\in \Theta) $ is a desired partition of $ E $ and we 
are done. Suppose it is not the case.
If there is some  $ T_j^{\beta} $ that contains an $ M_j $-circuit $ C $ with $ C\not \subseteq R_j $, 
then we take an $ 
e\in P_j\cap C $ (see property (\ref{TRP 2}))  and define $T_j^{\beta+1}:= T_j^{\beta}-e $ and $ T_i^{\beta+1}:= T_i^{\beta} $ 
for $ i\neq j $.  The 
preservation of the conditions  (\ref{TRP 1})-(\ref{TRP 8}) is trivial. If there is no such a $ T_j^{\beta} $, then there must be 
some $ e\in E $ which is not covered by the sets $ T_i^{\beta} $. Then there is a unique $ k\in \Theta $ with $ e\in R_k $.
If $ M_k $ is cofinitary then let  $ T_k^{\beta+1}:=T_k^{\beta}+e $ and $ T_i^{\beta+1}:=T_i^{\beta} $ for $ i\neq k $.
We proceed the same way if $ M_k $ is finitary and $ T_k^{\beta}+e $ does not contain any $ M_k $-circuit $ C $ with $ C\not 
\subseteq R_k $. The preservation of the conditions is again straightforward in both cases. 

Finally assume that  $ M_k $ is finitary and 
$ T_k^{\beta}+e $ contains an $ M_k $-circuit $ C $ with $ C\subsetneq R_k $.  Let $ f $ be the $ <_k $-maximal element of 
such a $ C 
$ and  we define  $ 
T_k^{\beta+1}:= 
T_k^{\beta}+e-f$ and
$ T_i^{\beta+1}:= T_i^{\beta}$ for $ i\neq k $. Since  $ C\cap P_k\neq \varnothing $ (because $ C\not\subseteq R_k $) and  the 
elements of $ P_k $ are $ <_k 
$-larger than the elements  of $ R_k $, we  have $ f\in P_k $. Conditions (\ref{TRP 1})-(\ref{TRP 5}) remain true for obvious 
reasons. Suppose for a contradiction that condition (\ref{TRP 6}) fails and $ C' $ is an $ M_k $-circuit in $ T_k^{\beta+1} $  
with  $ C'\not \subseteq R_k $. Then
$ f\notin C' $ and we must have $ e\in C' $  since otherwise $ C'\subseteq T^{\beta}_k $  and therefore  this condition would have 
been already violated 
with respect to $ T^{\beta}_k $. By applying 
strong circuit elimination with the $M_k  $-circuits $ C $ and $ C' $, we obtain a circuit 
$ C''\subseteq C\cup C'-e $ through $ f $.  But then $ C''\subseteq T_k^{\beta} $ is an $ M_k $-circuit and $ f $ witnesses  
$ C''\not\subseteq R_k $ in violation of condition 
(\ref{TRP 6}) for $ \beta $ which is a contradiction. To check  (\ref{TRP 7}), we may assume that $ f\in S_g^{\beta} $ since 
otherwise $ S_g^{\beta}\subseteq T_k^{\beta+1} $ and thus $ S_g^{\beta+1}\preceq_k S_g^{\beta}  $. If $ 
S_g^{\beta}=\{ g \} $, then $ 
f=g $ by $ f\in S_g^{\beta} $  and by the choice of  $ f $  we have $  S^{\beta+1}_f \preceq_k C-f \prec_k \{ f \} $.
Otherwise there is an $ M_k $-circuit $ C'\ni f,g$ such that $ S^{\beta}_g=C'-g\subseteq T_k^{\beta} $. By applying strong 
circuit elimination with  $ C $ and $ C' $, we obtain a circuit 
$ C''\subseteq C\cup C'-f $ through $ g $. Since $ f\in C'\setminus C'' $ and each element of $ C''\setminus C' $ is $ \prec_k 
$-smaller than 
$ f $ (because $ f=\max_{\prec_k} C $) we may conclude that $ C''\setminus C'\prec_k C'\setminus C'' $. Thus by applying 
Observation 
\ref{obs:common part}  iteratively  we get $ C''-g\prec_k C'-g $. Therefore
\[  S^{\beta+1}_g\preceq_k C''-g\prec_k C'-g=S^{\beta}_g.\]

The recursion is done and it  terminates at some ordinal since the constructed set families $ (T_i^{\beta}: i\in \Theta) $ 
are pairwise distinct by conditions (\ref{TRP 2}), (\ref{TRP 3}) and (\ref{TRP 8}).
\end{proof}
Let us point out that the special case of Proposition \ref{prop: family CBw} in which $ P_i $ and $ 
R_i $ are bases of $ M_i $  is exactly \cite[Theorem 1.2]{erde2019base}. Now we derive Theorems \ref{thm: infmatOre} and \ref{thm: family 
CB} from Proposition \ref{prop: family CBw}:

\infmatOre*

\begin{proof}
We can assume  by contracting $ F_0\cap F_1 $ and deleting $E\setminus (F_0\cup F_1) $ in both matroids  that the sets $ F_i $ form a 
bipartition of $ E $. We apply Proposition \ref{prop: family CBw} with $ \Theta=\{ 0,1 \} $,  matroids $ M_0 $ and $ 
M_1^{*} $ and sets $ P_0:=R_1:=F_0 $ and $ 
P_1:=R_0:=F_1 $. From the resulting  bipartition $ E=T_0\sqcup T_1 $ we take $ F:=T_0 $. Then

\begin{enumerate}
\item $ \mathsf{span}_{M_0}(F)\supseteq F_0 $,
\item $ \mathsf{span}_{M_1^{*}}(E\setminus F)\supseteq F_1 $,
\item $ \mathsf{span}_{M_0^{*}}(E\setminus F)\supseteq F_0 $,
\item $ \mathsf{span}_{M_1}(F)\supseteq F_1 $.
\end{enumerate}
\end{proof}

\main*
\begin{proof}
 We may assume that $ \Theta\cap E=\varnothing $. For $ i\in  \Theta$, we construct a matroid $ M'_i $ by ``copying'' $ M_i $ to $ \{ i \}\times E 
 $ and then extending to $ \Theta\times E $ by loops.  
 For $ e\in  E$, we construct a matroid $ N'_e $ by copying $ N^{*}_i $ to $\Theta \times\{ e \} $ and then extending to $ \Theta\times E $ by 
 loops. The sets $ R'_i:=\{ i \}\times R_i $ for $ i\in \Theta $ together with the sets $R'_e:= \{i\in \Theta:\ e\notin R_i  \}\times \{ e \} $ for $ e\in E 
 $ 
 cover $ \Theta \times E $. Furthermore, the elements of the family consisting of $ P'_i:=\{ i \}\times P_i $ for $ i\in \Theta $ and $ \{i\in \Theta:\ 
 e\notin P_i  \}\times \{ e \} $ for $ e\in E $ are pairwise disjoint. Let $ \{ T'_i,\  T'_e:\ i\in \Theta, e\in E \} $ be a partition of $ \Theta \times E 
 $ obtained by applying Proposition \ref{prop: family CBw} with the matroids $  M'_i,\  N'_e $, covering $ R'_i,\  R'_e $ and packing $ P'_i, P'_e\ 
 (i\in \Theta, e \in E)$. It is easy to check that the family consisting of the projections $ T_i $ of $ T'_i $ to $ E $ for $ i\in \Theta $ is as desired.
\end{proof}
\section{Applications}\label{sec: Appl}

\subsection{Cantor-Bernstein for path-systems}

We derive Theorem \ref{Pym}  from Theorem \ref{thm: infmatOrew}.

\Pym*

\begin{proof}
 For 
$ i\in \{ 0,1 \} $, we define $ M_{i} $ to be the cycle matroid of the 
graph we obtain from $ G $ by contracting $ V_i $ 
to a single vertex. 
Then 
$E(\mathcal{P}_i)\in \mathcal{I}_{M_0}\cap \mathcal{I}_{M_1}$ for $ i\in \{ 0,1 \} $. By applying Theorem 
\ref{thm: infmatOrew} with  $ 
I_i:=E(\mathcal{P}_i)$ and $ M_{1-i} 
$, we can find an $ I\in \mathcal{I}_{M_0}\cap \mathcal{I}_{M_1}$ with $E( \mathcal{P}_{1-i})\subseteq 
\mathsf{span}_{M_i}(I) $ for $ i\in \{ 0,1 \} $. 
Then  $G[I] $ is a forest in which every tree 
meets each $ V_i $ at most once. Each connected component of $G[I] $ which meets both $ V_i $ contains a unique $ 
V_0V_1 
$-path. We define $ 
\mathcal{P} $ to be the set of these paths. It remains to show that  $ \mathcal{P} $ satisfies the requirements. Let $ v_0\in V(\mathcal{P}_i)\cap 
V_i $. It is enough to show that $ v_0 $ is reachable 
from  
$ V_{1-i} $ in $G[I] $ because then the (unique) path witnessing this is in $ \mathcal{P} $.   Consider the path
 $ P\in \mathcal{P}_i $ through $ v_0 $. Let  the vertices of  $ P $ be $ v_0,\dots, v_n $ 
enumerated in the 
path-order starting from  $ V_i $. It follows from  
$ E(P)\subseteq \mathsf{span}_{M_{1-i}}(I) $ that for every $ k< n $ either $ G[I] $ contains a path between $ v_k $ and $ v_{k+1} $ or both of 
them are reachable from $ V_{1-i} $ in $G[I] $. Vertex $ v_n $ is obviously reachable from $ V_{1-i} $ 
because it is an element of it.   If we already know that $ v_{k+1} $ is  reachable from $ V_{1-i} $ in $ G[I] $, then it follows that $ v_k $ is 
reachable as well. Thus by induction  $ v_0 $ is 
reachable 
from  
$ V_{1-i} $ in $G[I] $ which completes the proof. 
\end{proof}

\subsection{Matroid Intersection} 
\cofinal*
\begin{proof}
We start with the `cofinal' part of the statement. Let $ J\in \mathcal{I}_{M_0}\cap \mathcal{I}_{M_1} $ be given. We take 
an $ I'\in 
\mathsf{SM}(M_0, M_1) $ and  fix a 
partition $ 
E=E_0\sqcup E_1 $ such that $ I'_i:=I'\cap E_i $ spans $ E_i $ in $ M_i $ for $ i\in \{ 0,1 \} $.  By applying Theorem 
\ref{thm: infmatOre} with the 
matroids $ M_i\upharpoonright E_i $ and $ M_{1-i}.E_i $ and sets $ I'_i $ and $ J_i :=J\cap E_i$, we obtain a base $ I_i $ of 
$ 
M_i\upharpoonright E_i 
$ which is 
independent in $ M_{1-i}.E_i $ and  spans $ J_i $ in $ M_{1-i}.E_i $. We claim that $ I:=I_0\sqcup I_1 $ is as desired.
Indeed,  $ I\in  \mathsf{SM}(M_0, M_1) $ because $ I_i $ is an $ M_{1-i}.E_i $-independent 
base of $ M_i\upharpoonright E_i $.  Finally, $ I_{1-i} $ spans $ J_{1-i} $  in $ M_{i}.E_{1-i}=M_i/E_i $ and $ I_i 
\subseteq 
E_i $ spans 
$ E_i $  (which contains $ J_i $) in $ M_i $  by construction thus $ J\subseteq \mathsf{span}_{M_i}(I) $. Therefore  $ 
J\subseteq 
\mathsf{span}_{M_0}(I)\cap \mathsf{span}_{M_1}(I) $ which means $ J \trianglelefteq_{M_0, M_1} I $.

In order to show the `not necessarily upward closed' part we shall construct first a bipartite graph $ G=(V_0,V_1; E) $. We 
start with a 
double ray $ \dots, v_{-1}, v_0, v_1,\dots $ and add a new vertex $ w_i $ and new edge $ v_iw_i $ for $ i\in \{ 0,1 \} $
(see Figure \ref{fig: up close fail}). The bipartite graph 
 $ G $ induces two partition  matroids $ M_0 $ and $ M_1 $ 
on $ E $ in the way that $ I\subseteq E $ is defined to be independent in $ M_i $ if no two edges in $ I $ have a common 
end-vertex  
in $ 
V_i $. Then the elements of  $ \mathcal{I}_{M_0}\cap \mathcal{I}_{M_1} $ are exactly the matchings, moreover, matching 
$ I 
$ is in $\mathsf{SM}(M_0, M_1) $ iff one can choose exactly one vertex from each $ e\in I $ such that the resulting  set is a 
vertex cover.  Let 
 \[ I_i:=\{ v_{2k+i}v_{2k+1+i}:\  k<\omega\} \text{ for } i\in \{ 0,1 \}. \] 
On the one hand, the matchings $ I_i $ cover the same vertices 
thus   \[  I_0\trianglelefteq_{M_0, 
M_1}  I_1\trianglelefteq_{M_0, M_1} I_0. \] 
On the other hand, we claim that $ I_1 $ is strongly maximal but $ I_0  $ is not. Indeed, $ \{ v_{-2k},\ v_{2k+1}:\ k<\omega  
\} $ is a vertex 
cover (upper-left and lower-right corners on Figure \ref{fig: up close fail}) that consists of choosing exactly one end-vertex of 
each edge in $ I_1 
$  
and 
therefore witnessing $ I_1\in 
\mathsf{SM}(M_0, M_1) $.  But 
there is no such a vertex cover for $ I_0 $ because if we pick $ v_i $ from the edge $ v_0v_1 $, then we cannot choose any 
end-vertex of $ 
v_{1-i}w_{1-i} 
$. Thus $ \mathsf{SM}(M_0, M_1)  $ is not upward closed in 
$ (\mathcal{I}_{M_0}\cap \mathcal{I}_{M_1},\trianglelefteq_{M_0, M_1} )$.

\begin{figure}[h]
\centering
\begin{tikzpicture}

\node[inner sep=0pt] (v3) at (0.5,1.5) {$v_{0}$};
\node[inner sep=0pt] (v23) at (1.5,1.5) {$w_1$};
\node[inner sep=0pt] (v22) at (0.5,-0.5) {$w_0$};
\node[inner sep=0pt] (v13) at (1.5,-0.5) {$v_{1}$};

\node (v10) at (-1.5,1.5) {};
\node (v8) at (-1.5,-0.5) {};
\node (v9) at (-1,1.5) {};
\node (v6) at (-1,-0.5) {};
\node (v7) at (-0.5,1.5) {};
\node (v4) at (-0.5,-0.5) {};
\node (v5) at (0,1.5) {};
\node (v2) at (0,-0.5) {};
\node (v12) at (2,1.5) {};
\node (v15) at (2,-0.5) {};
\node (v14) at (2.5,1.5) {};
\node (v17) at (2.5,-0.5) {};
\node (v16) at (3,1.5) {};
\node (v19) at (3,-0.5) {};
\node (v18) at (3.5,1.5) {};
\node (v21) at (3.5,-0.5) {};
\node (v20) at (3.8,1.5) {};
\node (v11) at (-1.8,-0.5) {};

\draw  (v2) edge (v3);
\draw  (v4) edge (v5);
\draw  (v6) edge (v7);
\draw  (v8) edge (v9);
\draw  (v10) edge (v11);
\draw  (v12) edge (v13);
\draw  (v14) edge (v15);
\draw  (v16) edge (v17);
\draw  (v18) edge (v19);
\draw  (v20) edge (v21);

\draw[dashed]  (v3) edge (v13);
\draw[dashed]  (v2) edge (v5);
\draw[dashed]  (v4) edge (v7);
\draw[dashed]  (v6) edge (v9);
\draw[dashed]  (v8) edge (v10);
\draw[dashed]  (v12) edge (v15);
\draw[dashed]  (v14) edge (v17);
\draw[dashed]  (v16) edge (v19);
\draw[dashed]  (v18) edge (v21);

\draw[dotted]  (v22) edge (v3);
\draw[dotted]  (v13) edge (v23);
\draw  (-2,2) rectangle (4,1);
\draw  (-2,-1) rectangle (4,0);

\node at (-2.5,1.5) {$V_0$};
\node at (-2.5,-0.5) {$V_1$};
\node at (-2,0.5) {$\dots$};
\node at (4,0.5) {$\dots$};
\end{tikzpicture}
\caption{Matching $ I_0 $ consists of the dashed and $ I_1 $ consists of the normal edges.}\label{fig: up close fail}
\end{figure} 
\end{proof}

\printbibliography

\end{document}